\documentclass{amsart}

\usepackage{graphicx} 
\usepackage{amssymb}

\newtheorem{theorem}{Theorem}[section]
\newtheorem{corollary}[theorem]{Corollary}
\newtheorem{claim}{Claim}

\begin{document}

\title[half-monochromatic colorings of plane graphs]{The half-monochromatic colorings of plane graphs with even polygonal faces}

\author{Kazuhiro Ichihara}
\address{Department of Mathematics, College of Humanities and Sciences, Nihon University, 3-25-40 Sakurajosui, Setagaya-ku, Tokyo 156-8550, JAPAN}
\email{ichihara.kazuhiro@nihon-u.ac.jp}

\author{Yuha Tamura}
\address{Division of Mathematical Sciences, Department of Earth Information Mathematical Sciences, Graduate School of Integrated Basic Sciences, Nihon University, 3-25-40 Sakurajosui, Setagaya-ku, Tokyo 156-8550, JAPAN}

\date{\today}

\subjclass[2020]{Primary 05C15, Secondary 05C10}

\keywords{half-monochromatic coloring, plane graph, graph coloring}

\begin{abstract}
On the maximum number of colors for proper anti-rainbow colorings on a planar quadrangulation, an upper bound was given by Enami-Ozeki-Yamaguchi in terms of the independence number. 
In this paper, as an extension, we introduce the half-monochromatic coloring on a plane graph with even polygonal faces, and give an upper bound on the maximum number of colors for such colorings in terms of the independence number. 
\end{abstract}

\maketitle

\section{Introduction}

Graph coloring has been extensively studied since the Four Color Problem was introduced, and it is one of the central topics in graph theory.
By the Four Color Theorem \cite{AH,AH1,AH2}, the minimum number of colors required to color a planar graph is four.
On the other hand, the maximum number of colors is equal to the number of vertices, since one may assign a distinct color to each vertex.
In this context, much research has been devoted to determining the maximum number of colors under additional conditions imposed on colorings.

For example, a \textit{proper anti-rainbow coloring} of a plane graph $G$ is considered, which is a proper coloring of $G$ such that each face is not rainbow, where a face is called \textit{rainbow} if any two vertices on its boundary have distinct colors.
The \textit{proper anti-rainbowness} of $G$ is the maximum integer $k$ such that $G$ admits a surjective proper anti-rainbow $k$-coloring, and it is denoted by $\chi^p_f(G)$.
In \cite[Theorem 1 (1)]{EOY}, the following is shown.
For a plane quadrangulation $G$, $\chi^p_f (G) \leq \frac{3}{2}\alpha (G)$ holds.
Here, a plane \textit{quadrangulation} is a plane graph in which each face is bounded by a cycle of length $4$, and $\alpha(G)$ denotes the independence number of $G$. 
Note that, on a face of a quadrangulation, there are two pairs of non-adjacent vertices, and at least one of these pairs is assigned the same color by a proper anti-rainbow coloring.

In this paper, we give an extension of this result.
Throughout the following, all colorings are assumed to be proper. That is, any two adjacent vertices receive distinct colors.
A plane graph $G$ is called a \emph{plane graph with even polygonal faces} if each face of $G$ is bounded by a cycle of even length.
We call a coloring of $G$ a \emph{half-monochromatic coloring} if, for every face $f$ of $G$, half of the vertices incident with $f$ receive the same color.

Then our main result is the following.

\begin{theorem}\label{1.1} 
Let $G$ be a plane graph with even polygonal faces.
Let $\chi_f(G)$ be the maximum integer $k$ such that $G$ admits a half-monochromatic $k$-coloring, and let $\alpha(G)$ be the independence number of $G$.
Then $\chi_f(G) \le \frac{3}{2}\alpha(G)$ holds.
\end{theorem}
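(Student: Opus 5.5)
The plan is to reduce everything to a statement about a small auxiliary graph built from the colour classes, and then bound its independence number, using the structure forced on each face. The first step is to record this structure. A plane graph all of whose faces are bounded by even cycles is bipartite, because the face boundaries generate the cycle space and they all have even length. Fix a bipartition $V(G)=B\cup W$. On a face $f$ bounded by a cycle $C$ of length $2m$, the $m$ vertices sharing a colour must be pairwise non-adjacent, since the colouring is proper. The only independent sets of size $m$ in $C_{2m}$ are the two colour classes of $C$. Hence, for every face, one of the two sides $V(f)\cap B$ or $V(f)\cap W$ is monochromatic. Call that side the \emph{monochromatic side} of $f$; if both sides are monochromatic, choose one.

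Next, fix a surjective half-monochromatic $k$-colouring with colour classes $C_1,\dots,C_k$. Let $H$ be the graph on $\{1,\dots,k\}$ in which $i$ and $j$ are adjacent if some edge of $G$ joins $C_i$ to $C_j$. If $I$ is an independent set of $H$, then $\bigcup_{i\in I}C_i$ is independent in $G$ and has at least $|I|$ vertices. It therefore suffices to prove
\[
\alpha(H)\ \ge\ \tfrac{2}{3}k .
\]
If this fails in general, I would keep track of $\sum_{i\in I}|C_i|$ rather than $|I|$, and use the fact that a class $C_i$ of size $1$ on the monochromatic side of many faces carries a lot of structure. To analyse $H$, note that $H$ is obtained from $G$ by identifying each colour class to a single vertex. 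On a face $f$ whose monochromatic side has colour $c$, the boundary collapses to a star centred at $c$. Its leaves are the colours on the other side, and these may repeat.

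So I would build a plane model of $H$ face by face. Place one vertex for each colour and, inside each face $f$, draw the star from its centre colour to the other colours on $\partial f$. Then argue that the union of these stars is planar, or at least that $H$ is a minor-like image of $G$ that inherits a drawing. The aim is to show that every cycle of $H$ comes from the interplay of stars of adjacent faces, and so to deduce that $H$ has a structure like a forest of stars with only restricted extra edges. Concretely, I would split the colours into three types: colours used only in $B$, colours used only in $W$, and colours used in both. Edges of $H$ run only between different types, or inside the mixed type. The target is a $2/3$-bound via a case analysis similar to \cite[Theorem 1 (1)]{EOY}: pick $I$ to be the larger of the sets ``$B$-only colours together with a suitable half of the mixed colours'' and ``$W$-only colours together with the other half,'' and show that the mixed colours contribute at most $k/3$ loss. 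The mechanism is to charge each mixed colour to the faces in which it is the monochromatic centre.

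The main obstacle I expect is exactly this charging step, which controls the colours occurring on both sides of the bipartition. Such a colour can be the centre of stars on $B$-faces and on $W$-faces simultaneously, and these can create odd cycles in $H$ such as triangles, for which the ratio $2/3$ fails at the level of colours. My first attempt is to show that, whenever a triangle or other odd structure appears in $H$, some colour class involved has at least two vertices. Lifting to $\bigcup_{i\in I}C_i$ then recovers the missing vertex, so the inequality $\alpha(G)\ge\frac{2}{3}k$ still holds. The argument would use planarity through Euler's formula on the star model, in the spirit of the quadrangulation case.
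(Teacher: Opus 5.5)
There is a genuine gap. Your preliminary observations are fine: $G$ is bipartite, one side of each face is monochromatic, and $\alpha(G)\ge\alpha(H)$ via unions of colour classes. After that, however, the proposal states aims rather than arguments, and the intermediate inequality $\alpha(H)\ge\frac{2}{3}k$ you reduce to is false, even for a maximum colouring.

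Take the quadrangulation with vertices $v,x_1,x_2,y_1,y_2,z$ and faces $vx_1y_1x_2$, $vx_2y_2x_1$, $zy_1x_2y_2$, $zy_2x_1y_1$. This is the $4$-cycle $x_1y_1x_2y_2$ with $v$ inside joined to $x_1,x_2$ and $z$ outside joined to $y_1,y_2$. The colouring $v\mapsto 1$, $x_1,x_2\mapsto 2$, $y_1,y_2\mapsto 3$, $z\mapsto 4$ is half-monochromatic. No $5$-colouring exists, because a single repeated pair cannot serve both the two faces at $v$ and the two faces at $z$. So $k=\chi_f(G)=4$, yet $H$ is the path $P_4$ with $\alpha(H)=2<\frac{8}{3}$. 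The failure has nothing to do with odd cycles. Passing to the weights $|C_i|$ is therefore not a fallback but the heart of the proof, and you give no mechanism for it. Conversely, the obstacle you single out (colours used on both sides, triangles in $H$) never arises for a maximum colouring, and your argument never uses maximality.

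What is missing is the topological structure the paper builds. In each face, the edges of $M(G)$ that cut off the vertices of the non-monochromatic side form a matching. Glued together, these form a family $\Lambda$ of disjoint simple closed curves. Each complementary region is monochromatic and independent in $G$, so $k$ is at most the number of regions. By the Jordan curve theorem the regions form a tree $T_\Lambda$. An edge of $G$ meets $\Lambda$ only at its midpoint, so it joins only regions adjacent in $T_\Lambda$.

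Your $H$ is the quotient of $T_\Lambda$ obtained by identifying equally coloured regions. For a maximum colouring it is $T_\Lambda$ itself, since giving the regions distinct colours is again half-monochromatic. Odd cycles and mixed colours are thus artifacts of the quotient.

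With the tree, the proof is a short count. Let $V_1$ be the set of leaves. If $|V_1|\ge\frac{2}{3}|V(T_\Lambda)|$, the union of the leaf regions is independent in $G$. Otherwise, note that every non-leaf region contains at least two vertices: a singleton region is enclosed by the small curve around its vertex, so it is a leaf. Hence $|V(G)|\ge 2|V(T_\Lambda)|-|V_1|>\frac{4}{3}k$, and bipartiteness gives $\alpha(G)\ge\frac{1}{2}|V(G)|$.

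In the example this second case applies. There $\alpha(G)=3$ is attained by the side $\{v,y_1,y_2\}$, which uses the two-vertex region $\{y_1,y_2\}$. Neither ingredient comes out of your star model, the Euler-formula idea, or the charging scheme as sketched.
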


As observed above, a proper anti-rainbow coloring of a plane quadrangulation is a half-monochromatic coloring.

It is known that the upper bound of $\chi_f(G)$ in Theorem~\ref{1.1} is best possible.
That is, there are infinitely many plane graphs $G$ with even polygonal faces (in fact, quadrangulations) such that $\chi_f(G) = \frac{3}{2}\alpha(G)$.
See \cite[Proposition 8]{EOY}.

We remark that a half-monochromatic coloring always exists for any plane graph $G$ with even polygonal faces.
It is well known that a plane graph with even polygonal faces is bipartite.
See \cite[Corollary 2.4.6]{MT} for example.
Hence, the vertices of $G$ can be colored black and white.
Note that the vertices on the boundary of each even face of $G$ are colored alternately black and white.
Thus, by assigning distinct colors to each white vertex, we obtain a half-monochromatic coloring of $G$.
It also follows that the maximum number of colors for half-monochromatic colorings on $G$ is at least $|V(G)|/2$.


\section{Proof of Theorem~\ref{1.1}}

We first prepare some definitions. 
These are natural extensions of those used in \cite{EOY}.

Let $G$ be a plane graph with even polygonal faces. 
The \textit{medial graph} of $G$, denoted by $M(G)$, is defined as follows. 
For each edge $e$ of $G$, we place a new vertex $[e]$ at the midpoint of $e$, connect two vertices $[e]$ and $[e']$ if $e$ and $e'$ appear consecutively on the boundary of a face of $G$, and delete all vertices and edges of $G$. 
A \textit{dividing system} of $G$ is defined as a spanning subgraph of $M(G)$ such that, inside each $2n$-gonal face of $G$, exactly $n$ edges forming a matching are chosen. 
See Figure~\ref{fig:dividing_system}. 
Note that a dividing system is a union of simple closed curves.

\begin{figure}[htbp]
\centering
\includegraphics[width=0.6\textwidth]{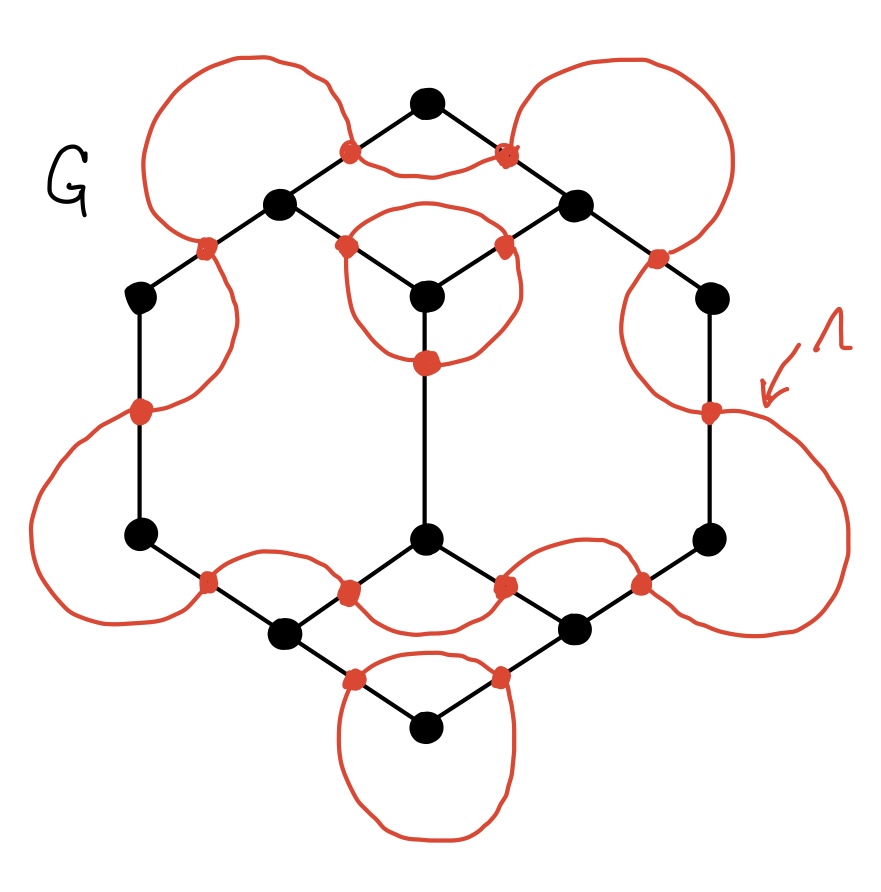}
\caption{A dividng system $\lambda$ of $G$}
\label{fig:dividing_system}
\end{figure}

For a dividing system $\Lambda$ of $G$, the \textit{division tree} $T_\Lambda$ is defined as follows.
Place a vertex in each region separated by $\Lambda$, and connect two vertices if the boundaries of the corresponding regions share a cycle of $\Lambda$.
See Figure~\ref{fig:divition_tree}.

\begin{figure}[htbp]
\centering
\includegraphics[width=0.6\textwidth]{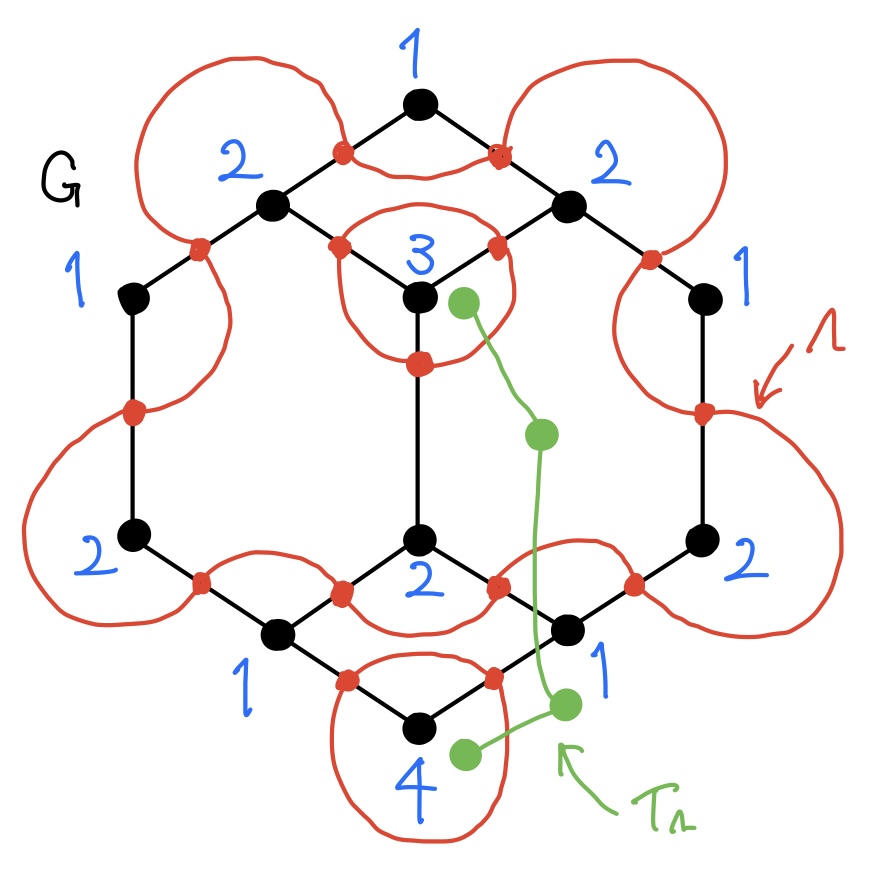}
\caption{The division tree $T_\Lambda$ of a dividing system $\Lambda$}
\label{fig:divition_tree}
\end{figure}

In fact, since each simple closed curve on the plane is separating by the Jordan Curve Theorem, the graph $T_\Lambda$ is a tree with $\lambda$ vertices, where $\lambda$ is the number of regions separated by $\Lambda$.

\begin{proof}[Proof of Theorem~\ref{1.1}]
Let $G$ be a plane graph with even polygonal faces. 
Let $\chi_f(G)$ be the maximum integer $k$ such that $G$ admits a half-monochromatic $k$-coloring, and let $\alpha(G)$ be the independence number of $G$.
Let $C$ be a half-monochromatic $\chi_f(G)$-coloring of $G$.
For each face of $G$, since $C$ is half-monochromatic, a set of pairwise non-adjacent vertices on the face is colored with the same color.
Let $\Lambda$ denote the subgraph of the median graph $M(G)$ consisting of the edges that surround the vertices not colored with the same color in each face. 
See Figure~\ref{fig:lambda_c} for an example.

\begin{figure}[htbp]
\centering
\includegraphics[width=0.75\textwidth]{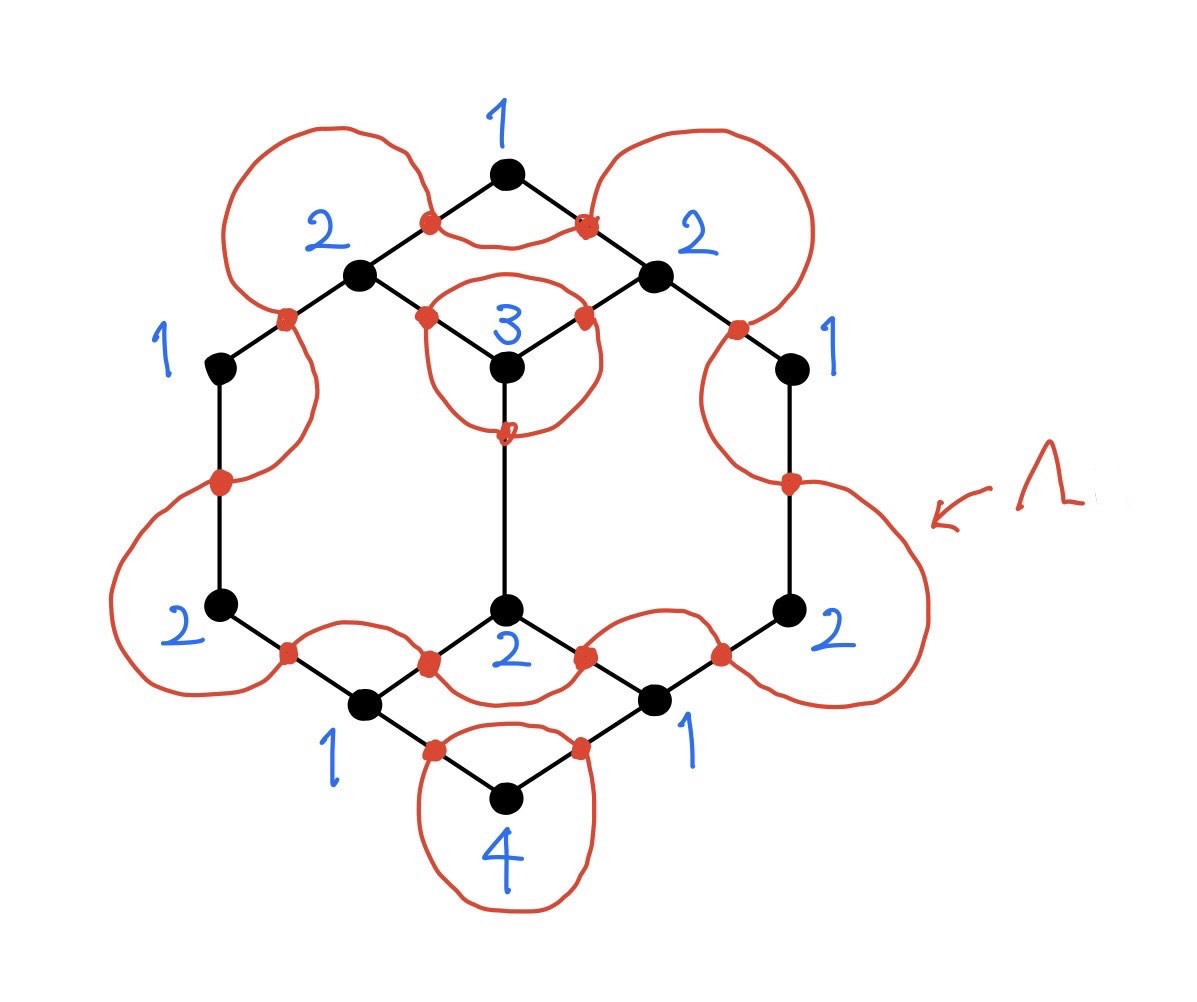}
\caption{An example of $\Lambda$}
\label{fig:lambda_c}
\end{figure}

\begin{claim}\label{claim1}
If $C$ is a half-monochromatic $\chi_f(G)$-coloring, then there is no face on whose boundary exactly two colors appear.
\end{claim}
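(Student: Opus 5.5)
We argue by contradiction, using the maximality of $\chi_f(G)$: if some face has exactly two colors on its boundary, we recolor to obtain a half-monochromatic coloring with more colors. Let $f$ be a $2n$-gonal face whose boundary carries exactly two colors, say $a$ and $b$. Since $C$ is proper and the boundary of $f$ is an even cycle, the colors alternate around $f$: one color class of the bipartition of $\partial f$ is entirely $a$, and the other is entirely $b$. In particular $f$ is ``doubly'' half-monochromatic, and the requirement at $f$ would still hold if only one side stayed monochromatic.

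The plan is to pick one vertex $v$ on $\partial f$, say $v$ colored $b$, and give it a brand new color $c$ not used in $C$. Properness is preserved because $c$ is new. At $f$, the $n$ vertices colored $a$ remain monochromatic. The new coloring is surjective onto $\chi_f(G)+1$ colors provided color $b$ still appears somewhere after the change. If $b$ appears only at $v$, then $n=1$, impossible since faces are cycles of length $\ge 4$; so if $n\ge 2$, other vertices of $\partial f$ still carry $b$. This would contradict maximality, provided every other face stays half-monochromatic.

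The main obstacle is the other faces $g$ incident with $v$, which may rely on $v$ being colored $b$ as part of their monochromatic half. To handle this, I would choose $v$ carefully. First consider the alternative of recoloring all the $b$-vertices of $\partial f$ with new distinct colors, or recoloring along a connected piece, so that $a$ still serves $f$. I expect the cleanest route is to choose $v$ so that no other face uses $v$ in its monochromatic class. If every face $g\ni v$ uses $v$ in a $b$-monochromatic half, then $g$'s half contains the $b$-colored neighbors-of-neighbors of $v$ along $g$. Instead of one vertex, recolor the whole set $S$ of vertices of color $b$ reachable from $v$ through faces where $b$ is the monochromatic color, giving all of $S$ the single new color $c$. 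Then every face whose monochromatic half was $b$ and met $S$ now has that half entirely in $S$ and is colored $c$. The remaining task, and the hardest step, is to ensure that $b$ survives outside $S$, or else to swap roles: recolor the $b$-side at $f$ versus the $a$-side, or use that at $f$ the other color $a$ suffices so $f$ can be split off. Concretely, I would define $S$ as the $b$-component obtained by propagating through faces other than $f$, and argue via the planar (Jordan curve / dividing system) structure that $S$ cannot contain all $b$-vertices of $\partial f$ for both choices of starting vertex and both colors. Failing that, I would give the vertices of $\partial f$ colored $b$ distinct new colors where they are not needed by other faces. Either way the count of colors strictly increases, contradicting that $C$ uses $\chi_f(G)$ colors.
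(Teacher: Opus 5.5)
\textbf{There is a genuine gap.} The recoloring framework is sound as far as it goes. If $S$ is a set of $b$-vertices closed under your propagation rule, and some $b$-vertex lies outside $S$, then giving $S$ a single new color yields a proper half-monochromatic coloring with one more color. This corresponds to the paper's Case~2, where the whole region $\sigma$ of $\Lambda$ is recolored when it contains no other vertex of $\partial f$ of its color. But the proposal stops exactly where the real work begins. The situation in which the propagation class of $v$ contains every $b$-vertex of $\partial f$ is the paper's Case~1, and you leave it as ``I would argue via the planar structure.'' Your fallback, giving the $b$-vertices of $\partial f$ new colors ``where they are not needed by other faces,'' is unjustified: every such vertex may lie in the monochromatic half of some other face. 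So the only step that uses planarity, and hence the only nontrivial step, is missing.

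Moreover, the planar statement you aim for is false under your rule as written, if faces other than $f$ that carry two colors are allowed to propagate. Take $G$ to be a single $4$-cycle $v_1v_2v_3v_4$ with $C(v_1)=C(v_3)=a$ and $C(v_2)=C(v_4)=b$, and let $f$ be the inner face. Propagating through the outer face gives $S=\{v_2,v_4\}$ for $b$ and $S=\{v_1,v_3\}$ for $a$. For both colors, $S$ exhausts the color class on $\partial f$, so recoloring $S$ merely renames a color, even though recoloring $v_2$ alone does work.

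Two ingredients are missing.
\begin{enumerate}
\item Propagate only through faces whose \emph{only} monochromatic half has color $b$. A face with two monochromatic halves stays half-monochromatic because its other half is untouched, and this rule automatically excludes $f$.
\item Add a crossing argument. Suppose the $a$-class of $v_1$ reaches $v_3$ through a shortest chain of faces $g_1,\dots,g_k$, and the $b$-class of $v_2$ reaches $v_4$ through faces $h_1,\dots,h_m$. Then all $g_i$ and $h_j$ are pairwise distinct and distinct from $f$. Draw arcs through their interiors and the corresponding vertices, and close them up by the chords $v_3v_1$ and $v_4v_2$ inside $f$. This gives two simple closed curves meeting in exactly one transversal crossing, which contradicts the Jordan curve theorem.
\end{enumerate}
Hence for one of the two colors, the class misses a vertex of $\partial f$ of that color, and recoloring that class increases the number of colors. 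This is what the paper's Case~1 asserts, phrased there in terms of the regions of $\Lambda$; your proposal would need to supply it.
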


\begin{proof}
The subgraph $\Lambda$ partitions the plane into several regions.
By the definition of $\Lambda$, all vertices of $G$ that belong to the same region receive the same color under the coloring $C$.

Suppose, for the sake of contradiction, that there exists a face $f$ of $G$ on whose boundary exactly two colors appear.
Let the vertices on the boundary of $f$ be listed in cyclic order as
\[
v_1, v_2, \dots, v_{2n}.
\]
Since the coloring $C$ assigns the same color to half of the vertices on the boundary, we may assume without loss of generality that
\[
C(v_1) = C(v_3) = \cdots = C(v_{2n-1}),
\]
\[
C(v_2) = C(v_4) = \cdots = C(v_{2n}) .
\]

Let $\sigma$ be the region of $\Lambda$ that contains the vertex $v_1$.
We now consider the following two cases.

\medskip
\noindent
\textbf{Case 1.} The region $\sigma$ contains at least one of the vertices
$v_3, v_5, \dots, v_{2n-1}$.

By the Jordan Curve Theorem, the union of $\sigma$ and the interior of the $2n$-gon corresponding to the face $f$ separates the regions containing
$v_2, v_4, \dots, v_{2n}$ into two groups.
By recoloring all vertices in one of these groups with a new color, we obtain a coloring that uses more colors than $C$.
This contradicts the definition of $\chi_f(G)$.

\medskip
\noindent
\textbf{Case 2.} The region $\sigma$ contains none of the vertices
$v_3, v_5, \dots, v_{2n-1}$.

Recolor all vertices contained in $\sigma$ with a new color distinct from the existing ones.
Since
\[
C(v_2) = C(v_4) = \cdots = C(v_{2n})
\]
the boundary of the face $f$ contains exactly three colors after the recoloring.
This again contradicts the definition of $\chi_f(G)$.

\medskip
\noindent
Therefore, in both cases, no boundary of a face of $G$ contains exactly two colors.
\end{proof}
By Claim~\ref{claim1}, the subgraph $\Lambda$ becomes a dividing system of $G$, and $\Lambda$ consists of a collection of simple closed curves.

We remark that the following holds.
 \[
\chi_f(G) \le ( \text{the number of regions of } \Lambda ).
\]
Because, by the definition of $\Lambda$, the vertices of $G$ that lie in the same region determined by $\Lambda$ are assigned the same color by $C$. 

Let $T_{\Lambda}$ be the division tree of $\Lambda$.
For each integer $i \ge 1$, we define the subset $V_i$ of $V(T_\Lambda)$ as
\[
\{ x \in V(T_{\Lambda}) \mid \deg_{T_{\Lambda}}(x) = i \}.
\]
Then the following holds.
\[
\sum_{i \ge 1} |V_i| = |V(T_{\Lambda})| \ge \chi_f(G).
\]

We consider the following two cases according to the value of $|V_1|$.

\medskip
\noindent
\textbf{Case~(i).} $|V_1| \ge \frac{2}{3}\chi_f(G)$.

By Claim~\ref{claim1}, we have $|V(T_{\Lambda})| \ge \chi_f(G)\geq 3$, and so, there are no adjacent vertices of degree~$1$ in $T_{\Lambda}$.

Let $a \in V(T_{\Lambda})$.
Let $R(a)$ denote the set of vertices of $G$ contained in the region corresponding to $a$. Note that $R(a)$ is an independent set by the definition of $\lambda$.

\begin{claim}\label{claim2}
For any $x,y \in V(T_\lambda)$ that are not adjacent in $T_\lambda$, any $u\in R(x)$ and any $v\in R(y)$ are non-adjacent in $G$.
\end{claim}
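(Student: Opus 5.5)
We prove the contrapositive: if $u\in R(x)$ and $v\in R(y)$ are adjacent in $G$, then $x=y$ or $x$ and $y$ are adjacent in $T_\Lambda$. The whole argument is local, taking place at the midpoint of the edge $uv$. Let $e=uv$; $e$ is an edge of $G$, so $[e]$ is a vertex of $M(G)$. Since $G$ has even polygonal faces and is $2$-connected along faces, the edge $e$ lies on the boundary of exactly two faces $f_1,f_2$; we treat a bridge-like degenerate situation, where $f_1=f_2$, the same way. By Claim~\ref{claim1}, $\Lambda$ is a dividing system. Inside each face $f_j$ it therefore contains a perfect matching on the medial vertices of that face, and $[e]$ is matched in $f_j$ to exactly one of its two neighbours along $\partial f_j$. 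Thus $[e]$ has degree exactly $2$ in $\Lambda$, and the closed curve $\Lambda$ passes through the point $[e]$.

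The key step is to show that $\Lambda$ crosses the segment $e$ transversally at $[e]$ and meets $e$ nowhere else. The curves of $\Lambda$ meet the edges of $G$ only at midpoints, since each edge of $M(G)$ runs inside a face from one midpoint to the next. So the open half-segments $[u,[e])$ and $([e],v]$ each lie in a single region of $\Lambda$. Near $[e]$, one edge of $\Lambda$ leaves into $f_1$ and the other into $f_2$, and these lie on opposite sides of $e$. The curve therefore enters on one side of $e$ and exits on the other, so locally it separates $u$'s half-segment from $v$'s half-segment. This transversality is the step needing the most care; I would justify it directly from the fact that the two $\Lambda$-edges at $[e]$ lie in the two different faces incident with $e$.

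Consequently, the regions containing $u$ and $v$ are the two regions on either side of one simple closed curve $\gamma$ of $\Lambda$ near the point $[e]$. In particular, both regions have $\gamma$ in their boundary. By the definition of $T_\Lambda$, two regions are joined when their boundaries share a cycle of $\Lambda$, so $x$ and $y$ are adjacent in $T_\Lambda$. It remains to note that $x\neq y$: $u$ and $v$ get different colors because $C$ is proper, whereas all vertices in one region share a color. Alternatively, the Jordan Curve Theorem shows that $\gamma$ separates the two sides.

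Hence adjacency in $G$ between $R(x)$ and $R(y)$ forces $xy\in E(T_\Lambda)$, which is the contrapositive of the claim.
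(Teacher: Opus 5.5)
Your proof is correct and takes the same route as the paper: the edge $uv$ meets $\Lambda$ only at its midpoint, where one closed curve of $\Lambda$ crosses it, so the regions of $u$ and $v$ lie on opposite sides of that curve and are adjacent in $T_\Lambda$. You also supply details the paper leaves implicit (the midpoint has degree $2$ in $\Lambda$, the crossing is transversal, and $x\neq y$); your degenerate case $f_1=f_2$ can be dropped, since every face is bounded by a cycle and so $G$ has no bridges.
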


\begin{proof}
Consider the division tree $T_{\Lambda}$.
Let $x,y \in V(T_{\Lambda})$ be non-adjacent vertices in $T_{\Lambda}$.
Suppose that there exist vertices $u \in R(x)$ and $v \in R(y)$ satisfying
\[
uv \in E(G).
\]
Then there exists a component of the dividing system $\Lambda$ (a simple closed curve) that intersects the edge $uv$ at exactly one point.
This implies that $x$ and $y$ are adjacent in $T_{\Lambda}$, a contradiction. 
Therefore, for any $x,y \in V(T_{\Lambda})$ that are not adjacent in $T_{\Lambda}$,
any $u \in R(x)$ and any $v \in R(y)$ are non-adjacent in $G$.
\end{proof}

Hence, together with that $R(x)$ is an independent set of $G$ for each $x \in V(T_{\Lambda})$, we see that $\bigcup_{x \in V_1} R(x)$ is an independent set of $G$.
Since $|R(x)| \ge 1$ for all $x\in V_1$, we conclude that
\[
\alpha(G) \ge \sum_{x \in V_1} |R(x)| \ge |V_1| \ge \frac{2}{3}\chi_f(G),
\]
and thus
\[
\chi_f(G) \le \frac{3}{2}\alpha(G).
\]

\medskip
\noindent
\textbf{Case~(ii).} $|V_1| < \frac{2}{3}\chi_f(G)$.

\begin{claim}\label{claim3}
Let $x \in V(T_{\Lambda})$.
If $\deg_{T_{\Lambda}}(x) \ge 2$, then $|R(x)| \ge 2$.
\end{claim}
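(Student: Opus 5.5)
We argue by contradiction: suppose $\deg_{T_{\Lambda}}(x)\ge 2$ but $R(x)$ consists of a single vertex $v$ of $G$. Let $\sigma$ be the region of $\Lambda$ corresponding to $x$. The plan is to show that $\sigma$ is then cut off by exactly one simple closed curve of $\Lambda$, so that $x$ is a leaf of $T_\Lambda$.

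The first step is to describe what a region containing only $v$ looks like. By the definition of $\Lambda$, the curves of $\Lambda$ pass through midpoints $[e]$ of edges of $G$. Inside each face $f$ incident with $v$, the curves of $\Lambda$ separate the monochromatic half of the boundary vertices from the others. In particular, $v$ lies in a part of $f$ cut off by arcs of $\Lambda$ joining midpoints of consecutive edges.

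Near $v$, the region $\sigma$ meets each face $f$ incident with $v$. Two cases arise inside such a face:
\begin{itemize}
\item if $v$ is not in the monochromatic half of $f$, then $\sigma\cap f$ is the corner triangle at $v$, bounded by the arc of $\Lambda$ joining the midpoints of the two edges of $f$ at $v$;
\item if $v$ is in the monochromatic half, then $\sigma\cap f$ also contains the other monochromatic vertices of $f$. Since $|R(x)|=1$ and $R(x)$ contains every vertex of $G$ lying in $\sigma$, this forces the monochromatic half to be $\{v\}$ alone, i.e.\ $f$ is a $2$-gon. That cannot happen for a cycle, so this case is excluded (for $n\ge 2$ the half has size $n\ge2$).
\end{itemize}

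Hence, in every face around $v$, $\sigma$ is just the corner at $v$. Gluing these corners in the rotation order around $v$, $\sigma$ is a small disk neighborhood of $v$. Its boundary is a single closed curve of $\Lambda$: the cycle in $M(G)$ through the midpoints of all edges incident with $v$. So $\sigma$ borders exactly one component of $\Lambda$, giving $\deg_{T_\Lambda}(x)=1$, which contradicts the assumption.

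The main obstacle is making rigorous that $\sigma$ contains no vertices of $G$ other than those in $R(x)$, and that a region in each incident face is either the corner or contains other monochromatic vertices. I would handle this by using the explicit local description of $\Lambda$ inside a face from its definition: $\Lambda$ consists of arcs surrounding each non-monochromatic vertex. I would also use Claim~\ref{claim1}, which says $\Lambda$ is a dividing system, so each face contributes exactly $n$ corner arcs around the $n$ non-monochromatic vertices.

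The degenerate possibility that $v$ is an isolated vertex or has degree one does not arise, since every face is bounded by a cycle. If $G$ had a vertex of degree $\ge1$ lying on the outer face, the same local argument applies to the outer face, treated like any other face.
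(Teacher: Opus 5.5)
Your proposal is correct and follows the paper's argument. In every face at $v$, $v$ must be a surrounded, non-monochromatic vertex, since otherwise the other monochromatic vertices of that face would lie in $\sigma$. Hence $\sigma$ is the small disk bounded by the single cycle of $\Lambda$ through the midpoints of the edges at $v$, which makes $x$ a leaf of $T_{\Lambda}$ — the same reasoning the paper sketches, with the corner-gluing and single-component steps spelled out more explicitly than the paper does.
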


\begin{proof}
Suppose that $|R(x)| = 1$, and let $v_1$ be the unique vertex contained in $R(x)$.
Consider a face whose boundary contains $v_1$, and label its boundary vertices in cyclic order as
$v_1, v_2, \dots, v_{2n}$.

Then either
\[
[v_1v_2] [v_2v_3],[v_3v_4][v_4v_5],\dots,[v_{2n-1}v_{2n}][v_{2n}v_1] \in E(\Lambda)
\]
or
\[
[v_{2n}v_1][v_1v_2],[v_2v_3][v_3v_4],\dots,[v_{2n-2}v_{2n-1}][v_{2n-1}v_{2n}] \in E(\Lambda).
\]

Since $|R(x)| = 1$, the latter configuration must occur for all such faces.
Hence, the interior of the corresponding closed curve of $\Lambda$ contains only the vertex $v_1$.
This contradicts the assumption that $\deg_{T_{\Lambda}}(x) \ge 2$.
\end{proof}

By Claim~\ref{claim3},
\begin{align*}
|V(G)| &= \sum_{x\in V(T_\lambda)}|R(x)| 
=\sum_{i=1}\left(\sum_{x\in V_i}|R(x)|\right) \\
&=\sum_{x\in V_1}|R(x)|+\sum_{i\ge2}\left(\sum_{x\in V_i}|R(x)|\right) \\
&\ge |V_1| + \sum_{i \ge 2} 2|V_i| \\
&\ge |V_1| + 2(\chi_f(G) - |V_1|)
= 2\chi_f(G) - |V_1| \\
&> \frac{4}{3}\chi_f(G).
\end{align*}

Since $G$ is bipartite, we have
\[
\alpha(G) \ge \frac{1}{2}|V(G)| > \frac{2}{3}\chi_f(G).
\]
Thus,
\[
\chi_f(G) < \frac{3}{2}\alpha(G).
\]

\medskip
\noindent
Combining Cases (i) and (ii), we conclude that
\[
\chi_f(G) \le \frac{3}{2}\alpha(G).
\]
\end{proof}

The following corollary follows from the proof of the theorem.
This can be used to determine the maximal number of colors 
for half-monochromatic colorings on a plane graph with even polygonal faces. 

\begin{corollary}
Let $G$ be a plane graph with even polygonal faces, and let $\chi_f (G)$ be the maximum integer $k$ such that a half-monochromatic $k$-coloring on $G$ exists.
Let $\lambda$ be the maximum number of regions partitioned by a dividing system for $G$. 
Then $\chi_f(G) = \lambda$ holds.
\end{corollary}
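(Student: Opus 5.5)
We prove the two inequalities $\chi_f(G)\le\lambda$ and $\chi_f(G)\ge\lambda$ separately.

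\textbf{Upper bound $\chi_f(G)\le\lambda$.} Take a half-monochromatic $\chi_f(G)$-coloring $C$ and form the subgraph $\Lambda$ of $M(G)$ exactly as in the proof of Theorem~\ref{1.1}. By Claim~\ref{claim1}, no face of $G$ carries exactly two colors, so in each $2n$-gonal face the monochromatic half is uniquely determined. Hence $\Lambda$ selects exactly $n$ edges of $M(G)$ inside that face, namely those cutting off the $n$ other vertices, and these edges form a matching. So $\Lambda$ is a dividing system. By construction, two vertices of $G$ in the same region of $\Lambda$ receive the same color. Therefore
\[
\chi_f(G)\le(\text{the number of regions of }\Lambda)\le\lambda,
\]
which is the remark already made in the proof.

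\textbf{Lower bound $\chi_f(G)\ge\lambda$.} Fix a dividing system $\Lambda_0$ with $\lambda$ regions. Color the vertices of $G$ by giving each region of $\Lambda_0$ its own color, all distinct. Three things must be checked.

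First, every region contains at least one vertex of $G$. A region of $\Lambda_0$ is bounded by cycles of $\Lambda_0$. Each edge $[e][e']$ of $M(G)$ lies in a face and cuts off the common endpoint of $e$ and $e'$. So on one side of each curve, immediately adjacent to it, there is a vertex of $G$ lying in the region on that side. Hence the coloring uses exactly $\lambda$ colors.

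Second, the coloring is proper. If $uv\in E(G)$, the midpoint $[uv]$ is a vertex of $\Lambda_0$. Since $\Lambda_0$ is spanning with degree two at $[uv]$, the curve through $[uv]$ crosses the edge $uv$ exactly once. So $u$ and $v$ lie on opposite sides of a simple closed curve, and by the Jordan Curve Theorem they lie in different regions.

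Third, the coloring is half-monochromatic. In a $2n$-gonal face with boundary $v_1,\dots,v_{2n}$, the matching of $n$ medial edges either cuts off $v_2,v_4,\dots,v_{2n}$ or cuts off $v_1,v_3,\dots,v_{2n-1}$. In the first case, the central part of the face connects $v_1,v_3,\dots,v_{2n-1}$ without crossing $\Lambda_0$. These $n$ vertices therefore lie in one region and share a color.

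\textbf{Main obstacle.} The step needing the most care is the lower bound: making rigorous, via the Jordan Curve Theorem, that regions of $\Lambda_0$ correspond to the color classes. In particular one must verify that vertices on the central side of a face really lie in a common region, and that every region contains a vertex. I would argue this locally, face by face, using the fact that $\Lambda_0$ is a disjoint union of simple closed curves passing through every midpoint exactly once.
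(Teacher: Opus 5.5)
Your proof is correct and follows the paper's route. The upper bound comes from Claim~1 making $\Lambda$ a dividing system on whose regions $C$ is constant, and the lower bound from coloring the regions of an optimal dividing system with distinct colors, where you supply more detail than the paper (surjectivity, the crossing argument for properness, the central-part argument); one small repair is needed in your surjectivity check: a region can lie on the central side of every medial edge bounding it, so you need that \emph{both} sides of each medial edge contain a vertex of $G$, which your own third check gives, since the central side contains the other half of that face's boundary vertices.
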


\begin{proof}
As shown in the proof of theorem, by Claim~\ref{claim1}, $\chi_f(G) \le \lambda$ holds. 
Let $\Lambda$ be a dividing system of $G$ that partitions the plane into $\lambda$ regions.
Assign distinct colors to these regions, and color all vertices of $G$ contained in the same region with the corresponding color. 
By the definition of a dividing system, any two vertices contained in the same region are non-adjacent in $G$.
Thus, this assignment yields a proper coloring of $G$.
Moreover, by the defining property of a dividing system, this coloring is a half-monochromatic $\lambda$-coloring. 
This implies that $\chi_f(G) \ge \lambda$ holds. 
\end{proof}

\section*{Acknowledgments}
The authors would like to thank Shun-ichi Maezawa for insightful conversations.

\bibliographystyle{amsalpha}
\bibliography{ITbib}

\end{document}